\newtheorem{theorem}{Theorem}[section]
\newtheorem{proposition}[theorem]{Proposition}
\newtheorem{conjecture}[theorem]{Conjecture}
\theoremstyle{definition}
\theoremstyle{remark}
\newtheorem{remark}[theorem]{Remark}
\numberwithin{equation}{section}
\providecommand{\bysame}{\leavevmode\hbox to3em{\hrulefill}\thinspace}
\def\DJ{{\hbox{D\kern-.8em\raise.15ex\hbox{--}\kern.35em}}}
\def\DJo{$\;$\kern-.4em
    \hbox{D\kern-.8em\raise.15ex\hbox{--}\kern.35em okovi\'c}}
\def\NSERC{Supported in part by an NSERC Discovery Grant.}
\def\impl{\ \Rightarrow\ }
\def\bZ{{\mbox{\bf Z}}}
\renewcommand{\subjclassname}{\textup{2000} Mathematics Subject
Classification }
\begin{document}

\title[Near-normal sequences]
{Some new near-normal sequences}

\author[D.\v{Z}. \DJ okovi\'{c}]
{Dragomir \v{Z}. \DJ okovi\'{c}}

\address{Department of Pure Mathematics, University of Waterloo,
Waterloo, Ontario, N2L 3G1, Canada}

\email{djokovic@uwaterloo.ca}

\thanks{\NSERC}

\keywords{Base sequences, normal and near-normal sequences, 
$T$-sequences, Hadamard matrices, Goethals--Seidel array, 
Yang numbers}

\date{}

\begin{abstract}
The normal sequences $NS(n)$ and near-normal sequences $NN(n)$ play an
important role in the construction of orthogonal designs and Hadamard 
matrices. They can be identified with certain base sequences 
$(A;B;C;D)$, where $A$ and $B$ have length $n+1$ and $C$ and $D$ 
length $n$. C.H. Yang conjectured that near-normal sequences exist 
for all even $n$. While this has been confirmed for $n\le30$, so far 
nothing else was known for $n>30$. Our main result is that $NN(32)$ 
consists of 8 equivalence classes and we exhibit their representatives. 
We also construct representatives for two equivalence classes of 
$NN(34)$. On the other hand, we have shown by exhaustive computer 
searches that $NS(31)$ and $NS(33)$ are void. 
\end{abstract}

\maketitle
\subjclassname{ 05B20, 05B30 }
\vskip5mm

\section{Introduction}

We deal with quadruples $(A;B;C;D)$ of binary sequences, i.e.,
sequences with entries $\pm1$. Base sequences are such quadruples,
with $A$ and $B$ of length $m$ and $C$ and $D$ of length $n$,
such that the sum of their nonperiodic autocorrelation
functions is a $\delta$-function. The collection of such
sequences is denoted by $BS(m,n)$.

In section \ref{baza} we recall the definition of base 
sequences and how they can be used to construct Hadamard matrices.
In section \ref{nor-niz} we define normal sequences, $NS(n)$, and
near-normal sequences, $NN(n)$, as some special classes of base
sequences $BS(n+1,n)$. We also recall some basic facts
about these sequences: their use to construct Yang multiplications,
their importance for the construction of $T$-sequences, and
what is known about their existence. The $T$-sequences are
not binary; they are quadruples of ternary sequences with entries
from $\{0,\pm1\}$, all of the same length. See the main text for 
the complete definition.

In section \ref{glavni} we describe the new results that we have
obtained. We have classified the near-normal sequences $NN(32)$
and constructed two non-equivalent near-normal sequences 
in $NN(34)$. Let us mention that in our recent paper \cite{DZ2} 
we have classified the near-normal sequences $NN(n)$ for all even 
$n\le30$. We have introduced there two equivalence relations in 
$NN(n)$: $BS$- and $NN$-equivalence. In this note we use only
the $NN$-equivalence. We also report that our exhaustive
searches have shown that $NS(31)=NS(33)=\emptyset$.
As a consequence of these facts, we deduce that $63$ and $67$
are not Yang numbers while $69$ is such a number.
By definition, a Yang number is an odd integer $2s+1$ such
that $NS(s)$ or $NN(s)$ is not empty.

\section{Base sequences} \label{baza}

We denote finite sequences of integers by capital letters. If, say,
$A$ is such a sequence of length $n$ then we denote its elements
by the corresponding lower case letters. Thus
$$ A=a_1,a_2,\ldots,a_n. $$
To this sequence we associate the polynomial
$$ A(x)=a_1+a_2x+\cdots+a_nx^{n-1} , $$
which we view as an element of the Laurent polynomial ring
$\bZ[x,x^{-1}]$. (As usual, $\bZ$ denotes the ring of integers.)
The nonperiodic autocorrelation function $N_A$ of $A$ is defined by:
$$ N_A(i)=\sum_{j\in\bZ} a_ja_{i+j},\quad i\in\bZ, $$
where $a_k=0$ for $k<1$ and for $k>n$. Note that
$N_A(-i)=N_A(i)$ for all $i\in\bZ$ and $N_A(i)=0$ for $i\ge n$.
The norm of $A$ is the Laurent polynomial $N(A)=A(x)A(x^{-1})$.
We have
$$ N(A)=\sum_{i\in\bZ} N_A(i) x^i . $$
To the sequence $A$ we associate two other sequences of the same
length: the negation
$$ -A = -a_1,-a_2,\ldots,-a_n $$
and the alternation
$$ A^* = a_1,-a_2,a_3,-a_4,\ldots,(-1)^{n-1} a_n. $$
By $A,B$ we denote the concatenation of the sequences $A$ and $B$.

The {\em base sequences} consist of four 
$\{\pm1\}$-sequences $(A;B;C;D)$, with $A$ and $B$ of length $m$ 
and $C$ and $D$ of length $n$, such that 
\begin{equation} \label{norma}
N(A)+N(B)+N(C)+N(D)=2(m+n).
\end{equation}
We denote by $BS(m,n)$ the set of such base sequences with
$m$ and $n$ fixed. 

It is known that $BS(n+1,n)\ne\emptyset$ for
$0\le n\le35$ (see \cite{KSY,KS}) and that $BS(2n-1,n)\ne\emptyset$ 
for all even $n=2,4,\ldots,36$ (see \cite{KSY,KS,KT}).

Base sequences can be used to construct Hadamard matrices. 
Recall that a Hadamard matrix of order $m$ is a $\{\pm1\}$-matrix
$H$ of order $m$ such that $HH^T=mI_m$, where
$T$ denotes the transpose and $I_m$ the identity matrix.
For instance if $(A;B;C;D)\in BS(n,n)$ then we can construct a
Hadamard matrix $H$ of order $4n$ as follows. Let $A^c$ denote
the circulant matrix having $A$ as its first row, and define
similarly the circulants $B^c,C^c$ and $D^c$. 
The construction of $H$ is based on the Goethals--Seidel array
$$
\left[ \begin{array}{rrrr}
		U & XR & YR & ZR \\
		-XR & U & -Z^TR & Y^TR \\
		-YR & Z^TR & U & -X^TR \\
		-ZR & -Y^TR & X^TR & U
\end{array} \right]. $$
To obtain $H$ we just substitute the symbol $R$ with the $n\times n$ 
matrix having ones on the back-diagonal and all other entries zero,
and substitute (in any order) the symbols $U,X,Y,Z$ with the four circulants
$A^c,B^c,C^c,D^c$. The condition (\ref{norma}) guarantees that
$H$ is indeed a Hadamard matrix.

In connection with this construction, observe that there is a map
$BS(m,n)\to BS(m+n,m+n)$ sending
$$ (A;B;C;D)\to(A,C;\, A,-C;\, B,D;\, B,-D). $$

\section{Normal, near-normal and $T$-sequences} \label{nor-niz}

{\em Normal\;} resp. {\em near-normal sequences}, originally defined 
by C.H. Yang \cite{Y}, can be viewed as a special type of base 
sequences $BS(n+1,n)$ (see \cite{KSY,DZ1}), namely such that 
$b_i=a_i$ resp. $b_i=(-1)^{i-1} a_i$ for $1\le i\le n$. 
We denote by $NS(n)$ resp. $NN(n)$ the subset
of $BS(n+1,n)$ consisting of normal resp. near-normal sequences.

Very little is known about the existence of normal sequences $NS(n)$.
{\em Golay sequences} of length $n$ are two $\{\pm1\}$-sequences $(A;B)$ of 
length $n$ such that $N(A)+N(B)=2n$. If such sequences of length $n$
exist, we say that $n$ is a {\em Golay number}. The known Golay numbers
are $n=2^a10^b26^c$, where $a,b,c$ are arbitrary nonnegative integers.
If $n$ is a Golay number, then $NS(n)\ne\emptyset$. Indeed, if
$(A;B)$ are Golay sequences of length $n$, then
$(A,+;A,-;B;B)\in NS(n)$. For $n\le 30$ it 
is known (see \cite{DZ1,HCD}) that $NS(n)=\emptyset$ iff
$$ n\in \{6,14,17,21,22,23,24,27,28,30\}. $$

The case of near-normal sequences $NN(n)$ is apparently more
promising. We mention that if $n>1$ and $NN(n)\ne\emptyset$, then
$n$ must be even. The following question (now known as Yang's
conjecture) was raised about twenty years ago.

\begin{conjecture} (Yang \cite{Y}) \label{YC}
$NN(n)\ne\emptyset$ for all positive even $n$'s.
\end{conjecture}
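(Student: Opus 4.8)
The plan is to attack the conjecture through a structural reformulation that decouples the four defining conditions of $NN(n)$ and, I hope, feeds a length-increasing recursion; since this is a long-standing open problem, what follows is the line of attack I would pursue rather than a complete argument. The starting point is that near-normality forces $B=A^*$, so $B(x)=A(-x)$, and hence, writing $N(A)=\sum_i N_A(i)x^i$,
$$ N(A)+N(B)=\sum_i N_A(i)\bigl(1+(-1)^i\bigr)x^i = 2\sum_{i\ \mathrm{even}} N_A(i)\,x^i. $$
Thus the odd-lag autocorrelations of $A$ cancel identically and only the even ones survive. Splitting $A$ (of odd length $n+1$, since $n$ is even) into its odd- and even-indexed subsequences $A_o$ and $A_e$, one has $N_A(2k)=N_{A_o}(k)+N_{A_e}(k)$, so that $N(A)+N(B)=2\bigl(N(A_o)+N(A_e)\bigr)(x^2)$. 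Feeding this into the norm condition (\ref{norma}) with $m=n+1$ splits it into two independent families: for odd $i\ne0$, $N_C(i)+N_D(i)=0$; and for even $i\ne0$, $2\bigl(N_{A_o}(i/2)+N_{A_e}(i/2)\bigr)+N_C(i)+N_D(i)=0$.

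First I would exploit this split to reduce the construction of $NN(n)$ to a complementary-sequence problem of roughly half the length. The odd-lag family asks only that $C$ and $D$ be ``complementary for odd lags,'' while the even-lag family couples the even-lag autocorrelations of $C,D$ to the norm sum $N(A_o)+N(A_e)$ of a pair of sequences of lengths $n/2+1$ and $n/2$. The aim is to package these into a reduction $NN(n)\leftrightarrow \mathcal{X}(n/2)$, where $\mathcal{X}$ is a tractable family resembling the base sequences $BS(n/2+1,n/2)$. Since $BS(n+1,n)\ne\emptyset$ is known for $0\le n\le35$, even a one-directional reduction of this type would propagate existence of $NN(n)$ well beyond the computer-verified range $n\le34$, though I stress that it would only link the conjecture to the (also open) non-emptiness of all base sequences $BS(m+1,m)$ rather than settle it outright.

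The heart of the matter, and the step I expect to be decisive, is to convert the reduction into an inductive step of additive type, ideally $NN(n)\ne\emptyset\Rightarrow NN(n+2)\ne\emptyset$. Combined with the base cases $n\le34$ (the present classification of $NN(32)$ and $NN(34)$ together with \cite{DZ2}), such a recursion would settle the conjecture for all positive even $n$. I would search for it by appending short fixed blocks to $A_o,A_e,C,D$ — equivalently, raising the reduced half-length by one — and correcting the resulting autocorrelation defect using the decoupled form above; the map $BS(m,n)\to BS(m+n,m+n)$ recalled in Section \ref{baza}, and Yang's multiplication, indicate the kind of block substitutions that preserve the underlying algebraic identities.

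The main obstacle is exactly that the conditions demand \emph{exact} vanishing of every nonzero-lag autocorrelation, a rigidity that has defeated recursive, algebraic, and probabilistic approaches to complementary sequences in general. The known infinite supplies — Golay sequences, Turyn-type and Yang multiplications — produce only sporadic lengths, essentially products of a few admissible values, and none is dense enough to reach every even $n$; a passage from half-length $n/2$ to $n/2+1$ is precisely the move that no such construction performs. A genuine proof therefore seems to require a new infinite family, or an additive step that my reduction does not yet obviously supply. Failing a clean additive step, the fallback is to prove a density statement for large $n$ via the reduced half-length problem and to close the remaining small cases by the exhaustive searches already used for $NN(32)$ and $NN(34)$; but the density version collides with the same rigidity, which is why the conjecture has stood open.
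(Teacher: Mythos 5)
The statement you were asked to address is Yang's conjecture, and it is an open problem: the paper does not prove it and does not claim to. Its actual contribution (Proposition \ref{skoro} and Table 1) is to confirm the two new cases $n=32$ and $n=34$ by exhibiting explicit sequences found by computer search, and to classify $NN(32)$ into eight $NN$-equivalence classes. Your text, as you yourself say, is a research program rather than a proof: the decisive ingredient --- an additive step $NN(n)\ne\emptyset\Rightarrow NN(n+2)\ne\emptyset$, or some density substitute --- is exactly what is not supplied, so the gap is not a repairable technical point but the entire content of the conjecture. Nothing in the proposal establishes the existence of a single near-normal sequence beyond those already known.

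Moreover, the structural identity on which your whole reduction rests is wrong as stated. Near-normality does not force $B=A^*$: the definition requires $b_i=(-1)^{i-1}a_i$ only for $1\le i\le n$, while $A$ and $B$ have length $n+1$, so the sign $b_{n+1}$ is free. The paper's own representative of $NN(32)$ in Proposition \ref{skoro} has $a_{33}=+1$ and $b_{33}=-1$, hence $B\ne A^*$ for it. In that case $B(x)=A(-x)+2b_{n+1}x^{n}$, and one computes $N(A)+N(B)=2\sum_{i\ \mathrm{even}}N_A(i)x^i+2b_{n+1}\bigl(x^{n}A(-x^{-1})+x^{-n}A(-x)\bigr)+4$, so the odd-lag autocorrelations do not cancel; your decoupled system describes only the subclass of $NN(n)$ satisfying $b_{n+1}=(-1)^{n}a_{n+1}$. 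A recursion or half-length equivalence built on that identity would at best prove nonemptiness of this subclass, which is a strictly stronger statement than Yang's conjecture and could be false even if the conjecture is true --- for all the paper's data shows, every equivalence class of $NN(32)$ might lie outside the subclass. Any serious pursuit of your program must therefore treat both sign cases of $b_{n+1}$, and at present neither case comes with the inductive step that would be needed.
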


It has been known since 1994 that near-normal
sequences exist for even $n\le30$ (see \cite{KSY}), but 
nothing else was known for larger values of $n$ (see \cite{HCD}). 

Some of the most powerful methods for constructing orthogonal
designs and Hadamard matrices are based on $T$-sequences
(see \cite{HCD}). Let us recall that $T$-{\em sequences} are 
quadruples $(A;B;C;D)$ of $\{0,\pm1\}$-sequences of the 
same length $n$ such that $N(A)+N(B)+N(C)+N(D)=n$ and, 
for each $i$, exactly one of $a_i,b_i,c_i,d_i$ is nonzero. 
We denote by $TS(n)$ the set of $T$-sequences of length $n$. 
It is known that $TS(n)\ne\emptyset$ for all
odd $n<100$ different from $73,79$ and $97$. It has been
conjectured that $TS(n)\ne\emptyset$ for all odd integers $n$.

Normal and near-normal sequences are important for the
construction of $T$-sequences.
If $NN(s)$ and $BS(m,n)$ are nonempty then there
is a map, called {\em Yang multiplication} \cite{Y,KSY}
\begin{equation} \label{YM}
NN(s) \times BS(m,n) \to TS((2s+1)(m+n)),
\end{equation}
and a similar statement is valid for $NS(s)$. For that
reason it is customary to refer to the odd integer $2s+1$
as a {\em Yang number} if $NS(s)$ or $NN(s)$ is nonempty.

\begin{remark} \label{greske}
While implementing in Maple the Theorems 1-4 of \cite{Y} we
discovered two misprints:
In the definition of $\tau_k$ on p. 770 one should replace the two
$f_k^*$'s with $f_k$'s, and in the definition of $\beta_k$ on 
p. 773 one should replace $A$ with $A^*$. 
The asterisk is used in \cite{Y}, and in this remark, to denote 
the reversed sequence. These errors were not easy to locate and 
correct, the same errors appear in \cite{KJS}.
\end{remark}

It is well known that there are infinitely many Yang numbers.
Indeed, if $s$ is a Golay number then $NS(2s+1)\ne\emptyset$,
and so $2s+1$ is a Yang number.
The known Yang numbers up to 100 are;
$$ 1,3,5,\ldots,31,33,37,39,41,45,49,51,53,57,59,61,65,81. $$
It is also known that $35,43,47$ and $55$ are not Yang numbers.

\section{New results} \label{glavni}

We show first that near-normal sequences $NN(32)$
and $NN(34)$ exist, and thereby confirm Yang's conjecture
for $n=32$ and $n=34$. These sequences have been discovered by 
using the same algorithm as in our paper \cite{DZ1}.

\begin{proposition} \label{skoro}
The sets $NN(32)$ and $NN(34)$ are nonempty.
\end{proposition}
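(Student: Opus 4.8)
The plan is to prove nonemptiness constructively: exhibit one explicit quadruple in each of $NN(32)$ and $NN(34)$ and verify that it satisfies the defining norm condition (\ref{norma}). Since a near-normal sequence is a base sequence $(A;B;C;D)\in BS(n+1,n)$ in which $B$ is tied to $A$ by $b_i=(-1)^{i-1}a_i$ for $1\le i\le n$, the whole quadruple is determined by $A$ (length $n+1$), the single free entry $b_{n+1}$, and $C,D$ (length $n$). Thus a candidate is specified by about $3n+2$ free $\pm1$ values, and exhibiting one valid assignment suffices. Because the constant term of $N(A)+N(B)+N(C)+N(D)$ is automatically $2(n+1)+2n=2(2n+1)$, verification reduces to checking that $N_A(i)+N_B(i)+N_C(i)+N_D(i)=0$ for every $i\ge1$, which is a finite and routine computation once the sequences are in hand.

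First I would shrink the naive search space of size $2^{3n+2}$ by exploiting the algebraic structure special to near-normal sequences. Since $B$ agrees with the alternation $A^*$ on its first $n$ entries, the autocorrelations of $A$ and $B$ are strongly linked through the identity $N_{A^*}(i)=(-1)^iN_A(i)$, so the combined contribution $N_A(i)+N_B(i)$ nearly cancels for odd $i$ and roughly doubles for even $i$, up to boundary corrections coming from $a_{n+1}$ and the free entry $b_{n+1}$. This splits the constraints by the parity of $i$ and prunes the candidate set sharply. I would further reduce the search by using the $NN$-equivalence relation of \cite{DZ2} to fix a canonical representative, and by imposing the usual necessary conditions—bounds on partial autocorrelation sums and on the power spectral density—to discard partial sequences early. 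This is precisely the algorithm already used in \cite{DZ1}.

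The main obstacle is computational rather than conceptual: the search space grows like $2^{3n+2}$, which at $n=32$ and $n=34$ is astronomically large, so the entire difficulty lies in the pruning and in organizing the backtracking (or a meet-in-the-middle split between the $A,B$ block and the $C,D$ block) well enough to reach a solution in feasible time. It is worth noting that for the present \emph{nonemptiness} claim a single witness in each case is enough, so a directed heuristic search—rather than a full exhaustive classification—already settles the proposition. Once a candidate $(A;B;C;D)$ survives the constraints, confirming that it lies in $NN(n)$ is immediate: one evaluates the four nonperiodic autocorrelation functions and checks that they sum to zero at each nonzero lag. Presenting the resulting explicit representatives for $n=32$ and $n=34$ then establishes that $NN(32)$ and $NN(34)$ are nonempty.
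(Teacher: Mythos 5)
Your proposal takes essentially the same route as the paper: the paper's proof consists precisely of exhibiting one explicit quadruple in each of $NN(32)$ and $NN(34)$ (found with the search algorithm of \cite{DZ1}) and observing that verifying (\ref{norma}) together with the near-normality condition $b_i=(-1)^{i-1}a_i$ is a routine finite computation. The only thing separating your outline from a complete proof is the data itself --- for an existence claim settled by exhibition, the explicit witnesses \emph{are} the proof, so the search you describe must actually be run and its output recorded, as the paper does.
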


\begin{proof}
To prove this, it suffices to verify that
$(A;B;C;D)\in NN(32)$, where
\begin{eqnarray*}
A&=&+,-,+,-,+,-,-,-,+,-,-,-,-,-,+,+,-,+,+,-, \\
&\ &-,+,+,-,+,-,-,+,-,+,-,-,+; \\
B&=&+,+,+,+,+,+,-,+,+,+,-,+,-,+,+,-,-,-,+,+, \\
&\ &-,-,+,+,+,+,-,-,-,-,-,+,-; \\
C&=&+,+,+,+,-,-,-,-,+,+,+,-,+,-,-,+,+,+,-,+, \\
&\ &+,+,-,-,+,+,-,+,+,+,-,+; \\
D&=&+,+,+,+,-,+,+,-,+,-,-,+,-,-,+,+,+,-,-,-, \\
&\ &-,-,+,-,+,-,+,+,+,+,-,+,
\end{eqnarray*}
and $(P;Q;R;S)\in NN(34)$, where
\begin{eqnarray*}
P&=&+,-,+,+,+,-,+,+,+,-,+,+,-,+,+,+,+,-,-,-, \\
&\ &+,+,+,+,+,+,-,-,-,-,+,-,-,-,+; \\
Q&=&+,+,+,-,+,+,+,-,+,+,+,-,-,-,+,-,+,+,-,+, \\
&\ &+,-,+,-,+,-,-,+,-,+,+,+,-,+,-; \\
R&=&+,+,-,-,-,-,+,+,-,-,+,-,+,+,-,-,-,-,+,+, \\
&\ &-,-,+,+,-,+,+,-,+,-,+,-,-,+; \\
S&=&+,+,+,-,-,+,-,+,-,+,-,-,-,-,-,-,+,+,+,+, \\
&\ &+,+,+,-,+,+,+,+,-,-,+,+,-,+.
\end{eqnarray*}
The signs ``$+$'' and ``$-$'' stand for $+1$ and $-1$, respectively. 
It is tedious to verify by hand that $(A;B;C;D)$ and $(P;Q;R;S)$ are
base sequences, but this can be easily done on a computer. 
The additional requirements for near-normality can be checked by 
inspection.
\end{proof}

\begin{proposition} \label{posledica}
The number $69$ is a (new) Yang number.
The numbers $63$ and $67$ are not Yang numbers.
\end{proposition}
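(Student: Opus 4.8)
The plan is to deduce all three statements from the definition of a Yang number together with the existence/nonexistence facts that have been established (in the excerpt and in \cite{DZ1}) for normal and near-normal sequences. Recall that an odd integer $2s+1$ is a Yang number precisely when at least one of the sets $NS(s)$ or $NN(s)$ is nonempty. Writing $69=2\cdot34+1$, $63=2\cdot31+1$, and $67=2\cdot33+1$, we see that the relevant values of $s$ are $34$, $31$, and $33$, so the entire proposition reduces to determining whether $NS(s)$ and $NN(s)$ are empty for these three values.

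For the positive claim, I would argue as follows. Since $s=34$ is even and Proposition \ref{skoro} exhibits an explicit element $(P;Q;R;S)\in NN(34)$, the set $NN(34)$ is nonempty; hence $69=2\cdot34+1$ is a Yang number by definition. To justify the word ``new'', I would point to the list of known Yang numbers up to $100$ given in the introduction, namely
$$ 1,3,5,\ldots,31,33,37,39,41,45,49,51,53,57,59,61,65,81, $$
and observe that $69$ does not appear in it, so it had not previously been recognized as a Yang number.

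For the two negative claims, the key is that $s=31$ and $s=33$ are both \emph{odd}. The excerpt records the fact that if $n>1$ and $NN(n)\ne\emptyset$ then $n$ must be even; applying this with $n=31$ and $n=33$ shows immediately that $NN(31)=NN(33)=\emptyset$. Thus for each of these two values the only way $2s+1$ could be a Yang number is through the normal sequences $NS(s)$. It therefore suffices to show that $NS(31)$ and $NS(33)$ are void, which is exactly the outcome of the exhaustive computer searches announced in the introduction. Combining $NS(31)=\emptyset$ with $NN(31)=\emptyset$ gives that $63$ is not a Yang number, and likewise $NS(33)=\emptyset$ together with $NN(33)=\emptyset$ gives that $67$ is not a Yang number.

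The only genuine content beyond this bookkeeping is the emptiness of $NS(31)$ and $NS(33)$, and here I would simply invoke the exhaustive searches reported earlier rather than reprove them; the main obstacle is thus not mathematical but computational, and it has already been discharged. I would be careful to state the parity argument explicitly, since it is what rules out the near-normal route for odd $s$ and lets the whole negative conclusion rest on the two computer verifications.
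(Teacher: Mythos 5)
Your proposal is correct and follows essentially the same route as the paper: $69$ is a Yang number because $NN(34)\ne\emptyset$ (Proposition \ref{skoro}), and $63$, $67$ are not because the exhaustive searches give $NS(31)=NS(33)=\emptyset$ while parity rules out $NN(31)$ and $NN(33)$. The only difference is that you spell out the parity step explicitly, which the paper's two-line proof leaves implicit.
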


\begin{proof} The first assertion holds since $NN(34)\ne\emptyset$. 
Our exhaustive computer searches showed that $NS(31)=NS(33)=\emptyset$.
This implies the second assertion.
\end{proof}

As 32 is a Golay number, we know that $NS(32)\ne\emptyset$. 
Hence, the first unresolved case for the existence question of
normal sequences $NS(n)$ is now $n=34$.

In our paper \cite{DZ2}, we have introduced two equivalence relations  
for near-normal sequences $NN(n)$: The $BS$-equivalence and the
$NN$-equivalence. The former is finer than the latter. An 
$NN$-equivalence class may contain 1,2 or 4 $BS$-equivalence classes.
In this note we use only the $NN$-equivalence.

In the case $n=32$ we have carried out an exhaustive search and
found that $NN(32)$ consists of 8 $NN$-equivalence classes.
In the case $n=34$ our search was not complete and we constructed 
only two non-equivalent near-normal sequences. 
We list in Table 1 the representatives of these 10 
$NN$-equivalence classes. The representatives are written in the 
compact encoded form. 
For the description of our encoding scheme see \cite{DZ1,DZ2}. 
The sequences $(A;B;C;D)$ and $(P;Q;R;S)$ displayed above
are the first sequences in Table 1 for $n=32$ and $n=34$, respectively.
The numbers $a,b,c,d$ resp. $a^*,b^*,c^*,d^*$ are the sums of the 
corresponding sequences $A,B,C,D$ resp. $A^*,B^*,C^*,D^*$.
Note that
$$ (A;B;C;D)\in NN(n) \impl (A^*;B^*;C^*;D^*) \in NN(n). $$

\begin{center}
\begin{tabular}{|r|l|l|r|r|}
\multicolumn{5}{c}{Table 1: Near-normal sequences $NN(n)$} \\ \hline 
\multicolumn{1}{|c|}{} & \multicolumn{1}{c|}{$A$ \& $B$} & 
\multicolumn{1}{c|}{$C$ \& $D$} & \multicolumn{1}{c|}{$a,b,c,d$} & 
\multicolumn{1}{c|}{$a^*,b^*,c^*,d^*$} \\ \hline
\multicolumn{5}{c}{$n=32$} \\ \hline
1 & $07656587173587123$ & $1611375364252851$ & $-5,5,8,4$ & $7,-7,-4,4$ \\
2 & $07643217328262853$ & $1657222254564485$ & $3,-7,6,-6$ & $-5,1,-10,2$ \\
3 & $07841512343414140$ & $1663752642548557$ & $9,7,0,0$ & $9,7,0,0$ \\
4 & $07651732153537650$ & $1767258654155337$ & $3,9,-2,6$ & $11,1,-2,2$ \\
5 & $07156434121787153$ & $1867665578785216$ & $3,9,-6,2$ & $11,1,-2,2$ \\
6 & $05671462321465123$ & $1166547238573585$ & $11,1,-2,2$ & $3,9,-6,-2$ \\
7 & $05128282658784653$ & $1653815347277422$ & $-1,-11,2,2$ & $-9,-3,-2,6$ \\
8 & $05126417143285123$ & $1657686527418862$ & $11,1,-2,-2$ & $3,9,6,2$ \\
\hline
\multicolumn{5}{c}{$n=34$} \\ \hline
1 & 076417646512321462 & 16738541372344337 & $7,7,-2,6$ & $9,5,4,-4$ \\
2 & 076782178767646231 & 17621532262576812 & $-5,3,10,-2$ & $5,-7,0,-8$ \\
\hline 
\end{tabular} \\
\end{center}

The $NN(32)$ above show that $65$ is a Yang number. However this 
fact is already known since $NS(32)\ne\emptyset$. Nevertheless, 
each of the above ten near-normal sequences provides infinitely many 
(probably new) Hadamard matrices by using the Yang multiplication (\ref{YM}) 
and the infinite supply of known Williamson-type matrices 
(see e.g. \cite{SY}).


\begin{thebibliography}{99}

\bibitem{HCD}
C.J. Colbourn and J.H. Dinitz, Editors, Handbook of Combinatorial Designs,
2nd edition, Chapman \& Hall, Boca Raton/London/New York, 2007.

\bibitem{DZ1}
D.\v{Z}. \DJo{},
Aperiodic complementary quadruples of binary sequences,
JCMCC {\bf 27} (1998), 3--31. Correction: ibid {\bf 30} (1999), p. 254.

\bibitem{DZ2}
\bysame, Classification of near-normal sequences, 
Discrete Mathematics, Algorithms and Applications, 
{\bf 1}, No. 3 (2009), 389--399.

\bibitem{KT}
H. Kharaghani and B. Tayfeh-Rezaie, A Hadamard matrix of order 428,
J. Combin. Designs {\bf 13} (2005), 435--440.

\bibitem{KSY}
C. Koukouvinos, S. Kounias, J. Seberry, C.H. Yang and J. Yang,
Multiplication of sequences with zero autocorrelation,
Austral. J. Combin. {\bf 10} (1994), 5--15.

\bibitem{KJS}
C. Koukouvinos and J. Seberry, Addendum to further results on base
sequences, disjoint complementary sequences, $OD(4t;t,t,t,t)$, and 
the excess of Hadamard matrices,
Congressus Numerantium {\bf 82} (1991), 97--103.

\bibitem{KS}
S. Kounias and K. Sotirakoglu, Construction of orthogonal sequences,
Proc. 14-th Greek Stat. Conf. 2001, 229--236 (in Greek).

\bibitem{SY}
J. Seberry and M. Yamada, Hadamard matrices, sequences and block
designs, in Contemporary Design Theory: A Collection of Surveys,
Eds. J.H. Dinitz and D.R. Stinson, J. Wiley, New York, 1992,
pp. 431--560.

\bibitem{Y}
C.H. Yang, On composition of four-symbol $\delta$-codes and
Hadamard matrices, Proc. Amer. Math. Soc. {\bf 107} (1989), 763--776.

\end{thebibliography}
\end{document}